\newtheorem{thm}{\bfseries Theorem}
\newtheorem{cor}[thm]{\bfseries Corollary}
\newtheorem{conj}[thm]{\bfseries Conjecture}
\def\R{\mathbb R}
\def\F{\mathbb F}
\def\per{\mbox{\rm Per}}
\def\sgn{\mbox{\rm sgn}}
\def\Det{\mbox{\rm Det}}
\def\rk{\mbox{\rm rk}}
\def\ac{\mbox{\rm ac}}
\def\S{{\mathrm S}}
\def\def\IPEfile{#}\input{\IPEfile}1{\def\IPEfile{#1}\input{\IPEfile}}
\begin{document}


\title[\bf Covering the permutohedron by hyperplanes \hfill {}]
{\bf Covering the permutohedron \\ 
by affine hyperplanes}


\author[ {} \hfill Gy. K\'arolyi, G. Heged\"us]{}

\address{\vskip -.1in}


\email{ karolyi.gyula@renyi.hu ``Gyula K\'arolyi''}
\email{ hegedus.gabor@uni-obuda.hu ``G\'abor Heged\"us''}


\keywords{Almost Cover, Alon--F\"uredi Theorem,
Combinatorial Nullstellensatz, Permutohedron, Zonotope.}

\maketitle
\vskip 1in


\noindent GYULA K\'AROLYI\hskip.1in
\noindent HUN--REN Alfr\'ed R\'enyi Institute of Mathematics,
Re\'al\-ta\-noda utca 13--15, H--1053 Budapest, Hungary
   and
Department of Algebra and Number Theory, 
E\"otv\"os University, 
P\'azm\'any P. s\'et\'any 1/C,
Budapest, H--1117 Hungary 
\smallskip

\noindent G\'ABOR HEGED\"US\hskip.1in
\noindent Institute of Applied Mathematics,
John von Neumann Faculty of Informatics,
\'Obuda University,
B\'ecsi \'ut 96/B,
Budapest, H--1032 Hungary

\vskip.85in

\setlength{\baselineskip}{15pt}


\noindent\textsc{Abstract.}
An almost cover of a finite set in the affine space is a collection of
hyperplanes that together cover all points of the set except one. Using
the polynomial method, we determine the minimum size of an almost cover of
the vertex set of the permutohedron and address a few related questions.
\vskip.2in



\bigskip
\section{Introduction} 

\noindent
Consider those points in the $n$-dimensional space whose coordinates
form a permutation of the first $n$ positive integers.
The elements of this set $P_n$ are the vertices
of a convex $(n-1)$-dimensional polytope called the permutohedron
(spelled also as permutahedron) $\Pi_{n-1}$.
For $n=3$ it is a regular hexagon, for $n=4$ a truncated octahedron. This
polytope has many fascinating properties and can be used to illustrate
various concepts in geometry, combinatorics and group theory.
For example, it is a simple polytope which tiles the affine hyperplane it
is living in. It is also a zonotope: a Minkowski sum of
$n\choose 2$ line segments.
It has $2^n-2$ facets, corresponding to
the nonempty proper subsets of $[n]$. In general, the number of faces of 
fixed dimension relates to Stirling numbers of the 2nd kind.
The symmetric group $\S_n$ acts on $\Pi_{n-1}$ by permuting the coordinates
and can be viewed as a subgroup of the group of its isometries. 
Using the natural labeling of the vertices
by the elements of $\S_n$ one can identify its edge skeleton
with the undirected Cayley graph of $\S_n$
defined by $n-1$ transpositions. An alternative labeling of the vertices
obtained by the anti-isomorphism $\pi \to \pi^{-1}$
reflects the construction of $\Pi_{n-1}$ as the
monotone path polytope of the $n$-cube, see \cite{Z}. 
\medskip

\noindent
An almost cover of a finite set in the affine space is a collection of
hyperplanes that together cover all points of the set except one. According to
a classical result of Jamison \cite{J}, an almost cover of the
$n$-dimensional affine space over the $q$-element finite field requires at
least $(q-1)n$ hyperplanes. Equivalently, to pierce every affine hyperplane in
$\F_q^n$ one needs at least $(q-1)n+1$ points, see \cite{BS2}. See also
\cite{BBS} for further results in finite geometries.
Another example is the
Alon--F\"uredi theorem \cite{AF}: {\em Every almost cover of the vertex set
of an $n$-dimensional cube requires at least $n$ hyperplanes.}
The key observation of the present note is the following analogue.

\begin{thm}
Every almost cover of the vertices of $\Pi_{n-1}$
consists of at least $n\choose 2$ hyperplanes. This bound is sharp.
\label{main}
\end{thm}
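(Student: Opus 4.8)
My plan is to prove sharpness by an explicit construction and the lower bound by a weighted‑summation argument in the spirit of the Alon--F\"uredi theorem. For sharpness I would take the $\binom{n}{2}$ hyperplanes
\[
  H_{i,k}=\{x\in\R^n:\ x_i=k\},\qquad 1\le i<k\le n,
\]
whose number is $\sum_{i=1}^{n-1}(n-i)=\binom{n}{2}$. To check that these cover every vertex of $\Pi_{n-1}$ except $v_0=(1,2,\dots,n)$, take any permutation $\pi\ne\mathrm{id}$ and let $i$ be the least index with $\pi(i)\ne i$; since $1,\dots,i-1$ already occur among $\pi(1),\dots,\pi(i-1)$, we must have $\pi(i)>i$, so the vertex $(\pi(1),\dots,\pi(n))$ lies on $H_{i,\pi(i)}$. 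On the other hand $v_0$ has $i$-th coordinate $i\notin\{i+1,\dots,n\}$ for every $i$, so $v_0$ lies on none of the $H_{i,k}$; this settles the upper bound.

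For the lower bound, suppose $\ell_1,\dots,\ell_m$ are degree‑one polynomials whose zero sets cover $P_n\setminus\{v_0\}$ but not $v_0$, and write $v_0=(\pi_0(1),\dots,\pi_0(n))$ for some $\pi_0\in S_n$. Put $Q=\ell_1\cdots\ell_m$, so $\deg Q\le m$, while $Q$ vanishes at every vertex $\ne v_0$ and $Q(v_0)\ne0$. The device I would introduce is the linear functional
\[
  \Lambda(f)=\sum_{\pi\in S_n}\sgn(\pi)\,f\bigl(\pi(1),\dots,\pi(n)\bigr)
\]
on $\R[x_1,\dots,x_n]$, the permutohedral counterpart of the alternating sum $\sum_{x\in\{0,1\}^n}(-1)^{x_1+\cdots+x_n}f(x)$ that underlies the cube case. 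On one hand $\Lambda(Q)=\sgn(\pi_0)\,Q(v_0)\ne0$, since every other term of the sum vanishes. On the other hand I claim $\Lambda(f)=0$ whenever $\deg f<\binom{n}{2}$: by linearity it suffices to treat a monomial $x_1^{a_1}\cdots x_n^{a_n}$, for which
\[
  \Lambda\bigl(x_1^{a_1}\cdots x_n^{a_n}\bigr)=\sum_{\pi\in S_n}\sgn(\pi)\prod_{i=1}^n\pi(i)^{a_i}=\det\bigl(j^{a_i}\bigr)_{1\le i,j\le n};
\]
if $a_1+\cdots+a_n<\binom{n}{2}$ then the non‑negative integers $a_1,\dots,a_n$ cannot be pairwise distinct, because $n$ distinct non‑negative integers sum to at least $0+1+\cdots+(n-1)=\binom{n}{2}$, so two rows of this matrix coincide and the determinant is $0$. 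Comparing the two evaluations of $\Lambda(Q)$ forces $\deg Q\ge\binom{n}{2}$, hence $m\ge\binom{n}{2}$.

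The only real choice in this argument is the weighting $\sgn(\pi)$: once one recognizes that for the permutohedron the sign character of $S_n$ plays exactly the role that the parity character of $(\Z/2)^n$ plays for the cube, both halves of the displayed claim reduce to one‑line verifications — and, notably, one never needs the non‑vanishing of the generalized Vandermonde determinant, only its obvious vanishing when an exponent is repeated. I therefore expect the substantive step to be identifying the correct functional; everything after that, including the verification of the explicit cover, is routine.
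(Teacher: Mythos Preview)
Your proof is correct, and the sharpness construction is identical to the paper's. The lower bound argument, however, follows a genuinely different route. The paper multiplies $Q=\ell_1\cdots\ell_m$ by the Vandermonde polynomial $V(x)=\prod_{i<j}(x_j-x_i)$ to obtain a polynomial of degree $\binom{n}{2}+m$ that vanishes on all of $\{1,\dots,n\}^n$ except at $v_0$, and then invokes the Alon--F\"uredi theorem (their Theorem~\ref{AF}) to conclude $\binom{n}{2}+m\ge n(n-1)$. Your argument is a self-contained replacement for that black box: the alternating functional $\Lambda$ plays precisely the role of ``integrate against $V$ over the grid'', and your determinant identity $\Lambda(x_1^{a_1}\cdots x_n^{a_n})=\det(j^{a_i})$ together with the pigeonhole on exponents substitutes for the Nullstellensatz machinery. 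What your approach buys is elementarity and a transparent parallel with the cube case; what the paper's approach buys is an immediate fit into the existing Combinatorial Nullstellensatz framework and a one-line observation (rather than a new functional) once that framework is in hand. Both arguments extend equally well to algebraic surfaces in place of hyperplane unions and to arbitrary ground fields.
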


\noindent
In comparison, all vertices of $\Pi_{n-1}$ are contained in just
one hyperplane, and even
when $\Pi_{n-1}$ is embedded into $\R^{n-1}$, its vertices
can be covered by $n$ hyperplanes, see Section 3.
Our proof of the above theorem relying on the polynomial
method parallels that of the Alon--F\"uredi theorem.
After recalling the Combinatorial Nullstellensatz, it is given in Section 3.
In Section 4 we prove a similar statement for prisms over permutohedra. 
In Section 5 we extract a version of the polynomial lemma that may be useful
in the study of permutations.
We conclude this paper by suggesting a common generalization of
Theorem \ref{main} and the Alon--F\"uredi theorem to zonotopes.

\bigskip
\section{Preliminaries}

\noindent
The {Combinatorial
Nullstellensatz}, formulated by Noga Alon in the late nineties,
describes, in an efficient way, the structure of multivariate polynomials 
whose zero-set includes a Cartesian product over a field $\F$.
This characterization immediately implies the following non-vanishing
criterion \cite{A1}, based on which one can prove Theorem \ref{main} via
Theorem \ref{perm} derived in Section 5.

\begin{thm} 
Let $f=f(x_1,\ldots ,x_n)$ be a polynomial in $\F[x_1,\ldots, x_n]$.
Suppose that there is a monomial $\prod_{i=1}^n x_i^{d_i}$  such that 
$\sum_{i=1}^n d_i$ equals the degree of $f$ and whose coefficient in $f$
is nonzero. If  $S_1,\ldots ,S_n$ are subsets of $\F$ with
$|S_i|>d_i$, then there are $s_1\in S_1, \ldots, s_n\in S_n$
such that $f(s_1,\ldots,s_n)\neq 0$.
\label{poly}
\end{thm}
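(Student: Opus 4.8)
The plan is to prove this non-vanishing criterion directly, by contradiction, using only the elementary fact that a nonzero univariate polynomial of degree $d$ over a field has at most $d$ roots. First I would shrink the problem: replace each $S_i$ by an arbitrary subset of size exactly $d_i+1$. Proving the statement for these smaller sets is only harder, so there is no loss in assuming $|S_i|=d_i+1$. Now suppose, for contradiction, that $f(s_1,\dots,s_n)=0$ for every $(s_1,\dots,s_n)\in S_1\times\cdots\times S_n$.

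The key step is to replace $f$ by a polynomial $\widetilde f$ with $\deg_{x_i}\widetilde f\le d_i$ for every $i$ that still agrees with $f$ on the whole grid and still has a nonzero coefficient on the monomial $M=\prod_{i=1}^n x_i^{d_i}$. To build $\widetilde f$, for each $i$ put $g_i(x_i)=\prod_{s\in S_i}(x_i-s)$, a monic polynomial of degree $d_i+1$ vanishing on $S_i$, and write $x_i^{d_i+1}=x_i^{d_i+1}-g_i(x_i)=:h_i(x_i)$, so $\deg h_i\le d_i$ and $x_i^{d_i+1}$, $h_i(x_i)$ take equal values on $S_i$. Starting from $f$, repeatedly pick a monomial in which some $x_i$ occurs to a power $a_i\ge d_i+1$ and rewrite the factor $x_i^{d_i+1}$ as $h_i(x_i)$; since $\deg h_i<d_i+1$, each such step replaces one monomial by monomials of strictly smaller total degree. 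Hence no monomial of total degree exceeding $\deg f$ is ever produced, the process terminates, and the outcome $\widetilde f$ has $\deg_{x_i}\widetilde f\le d_i$ for all $i$ and agrees with $f$ on $S_1\times\cdots\times S_n$, so it vanishes there. Finally, $M$ is never the target of a rewriting step (its variable-degrees are already at most $d_i$) and $M$ is never created by one (it has total degree $\deg f$, whereas every created monomial has total degree strictly below that of the monomial it came from, which is at most $\deg f$); thus the coefficient of $M$ is unchanged from $f$ to $\widetilde f$, and in particular $\widetilde f\neq 0$.

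It then remains to contradict the existence of a nonzero polynomial $\widetilde f$ with $\deg_{x_i}\widetilde f\le d_i$ that vanishes on $S_1\times\cdots\times S_n$ with $|S_i|=d_i+1$, which I would do by induction on $n$. The base case $n=1$ is exactly the at-most-$d$-roots fact. For $n>1$ write $\widetilde f=\sum_{j=0}^{d_n}c_j(x_1,\dots,x_{n-1})\,x_n^{\,j}$ with not all $c_j$ zero; picking $j$ with $c_j\neq 0$, the inductive hypothesis applied to $S_1,\dots,S_{n-1}$ yields $(s_1,\dots,s_{n-1})$ in $S_1\times\cdots\times S_{n-1}$ with $c_j(s_1,\dots,s_{n-1})\neq 0$, so $\widetilde f(s_1,\dots,s_{n-1},x_n)$ is a nonzero polynomial in $x_n$ of degree at most $d_n<|S_n|$ that vanishes on $S_n$, which is impossible. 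This contradiction finishes the argument.

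I expect the one genuinely delicate point to be the bookkeeping in the second paragraph: checking that the rewriting halts and, above all, that it leaves the coefficient of $M=\prod_i x_i^{d_i}$ untouched. Both are clinched by the single observation that every rewriting step strictly lowers the total degree of the monomial it acts on, while $M$ sits at the maximal degree $\deg f$ and is itself already reduced. (If one prefers to use more, the same conclusion drops out of the structure form of the Combinatorial Nullstellensatz --- $f=\sum_i g_i q_i$ with $\deg q_i\le \deg f-\deg g_i$ whenever $f$ vanishes on the grid --- by comparing the coefficient of $M$ on both sides; or from the Coefficient Formula, which expresses that coefficient as a $\pm$-weighted sum of the grid values $f(s_1,\dots,s_n)$ and hence is $0$ under the contradiction hypothesis. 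The elementary reduction above is the most self-contained route.)
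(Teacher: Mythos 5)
Your proof is correct. Note that the paper itself does not prove this statement: it is Alon's non-vanishing criterion, quoted with a citation to \cite{A1} and described there as an immediate consequence of the structural form of the Combinatorial Nullstellensatz (the decomposition $f=\sum_i g_iq_i$ with $g_i(x_i)=\prod_{s\in S_i}(x_i-s)$ and $\deg q_i\le\deg f-\deg g_i$). What you do instead is the direct, self-contained route: reduce $f$ modulo the $g_i$ by the explicit rewriting $x_i^{d_i+1}\mapsto x_i^{d_i+1}-g_i(x_i)$, observe that the reduction preserves grid values and --- since every rewrite strictly lowers total degree while $M=\prod_i x_i^{d_i}$ sits at the top degree and is already reduced --- leaves the coefficient of $M$ intact, and then invoke the multivariate ``degree $\le d_i$ in $x_i$ and vanishing on a $(d_i+1)$-point grid forces the zero polynomial'' lemma, proved by induction on $n$. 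This is essentially Micha\l{}ek-style: it bypasses the structural theorem entirely and rests only on the fact that a nonzero univariate polynomial of degree $d$ has at most $d$ roots. The bookkeeping you flag as delicate (termination of the rewriting, and the invariance of the coefficient of $M$) is handled correctly: termination follows from the well-founded multiset order on total degrees, and $M$ can neither be the target of a rewrite nor be created by one. The trade-off is the usual one: the citation keeps the paper short, while your argument makes the criterion fully elementary and exposes exactly the mechanism (reduction modulo the grid ideal) that the paper's later Remark after Theorem 3 alludes to.
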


\noindent
This result has innumerable variations with even more different proofs,
see e.g. \cite{R}. Apparently they all depend on two basic principles:
reduction modulo a standard Gr\"obner basis and Lagrange interpolation.
Particularly useful for us is the following variant.

\begin{thm}
Let $S_1,\ldots ,S_n$ be subsets of $\F$, $|S_i|=k_i$, and let $f$ be a
polynomial in $\F[x]=\F[x_1,\ldots, x_n]$ whose degree is at most
$\sum_{i=1}^n (k_i-1)$.
\begin{enumerate}

\item[(i)] If $f({s})=0$ for every
  ${s}\in S_1\times\dots\times S_n$, then the coefficient of
  the monomial $\prod_{i=1}^n x_i^{k_i-1}$ in $f$ is zero.

\item[(ii)] If $f({s})=0$ for all but one element
  ${s}\in S_1\times\dots\times S_n$, then the coefficient of
  the monomial $\prod_{i=1}^n x_i^{k_i-1}$ in $f$ is not zero.

\end{enumerate}
\label{punctured}
\end{thm}

\noindent
While the first statement is just the initial formulation of Theorem \ref{poly},
the second one already points towards applications for almost covers.
It is a very special case of Corollary 4.2 in \cite{BS1} and can be derived 
directly from (i) rather easily.
It also implies the following version of Theorem 5 in \cite{AF},
which can be used for the most transparent direct proof of Theorem \ref{main}.

\begin{thm} 
Let $S_1,\dots,S_n$ be nonempty subsets of $\F$, $B=S_1\times\dots\times S_n$.
If a polynomial $f\in \F[x_1,\ldots, x_n]$ vanishes at every point of $B$
except one, then its degree is at least $\sum_{i=1}^n(|S_i|-1)$.
\label{AF}
\end{thm}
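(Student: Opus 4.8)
The plan is to deduce this from the non-vanishing form of the Combinatorial Nullstellensatz, Theorem \ref{poly}, by comparing $f$ with the interpolation polynomial supported at the exceptional point. Let $b=(b_1,\dots,b_n)\in B$ be the unique point at which $f$ does not vanish, so $f(b)\neq 0$ and $f$ vanishes on $B\setminus\{b\}$, and suppose toward a contradiction that $\deg f<\sum_{i=1}^n(|S_i|-1)$. I would first introduce
\[
  G(x_1,\dots,x_n)=\prod_{i=1}^n\prod_{s\in S_i\setminus\{b_i\}}(x_i-s).
\]
Each inner product is a monic polynomial of degree $|S_i|-1$ in the single variable $x_i$, so $G$ has a unique term of maximal total degree, the monomial $\prod_{i=1}^n x_i^{|S_i|-1}$, whose coefficient is $1$ and whose degree is $\sum_{i=1}^n(|S_i|-1)$. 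Moreover $G$ vanishes at every point of $B$ other than $b$, while $G(b)=\prod_{i=1}^n\prod_{s\in S_i\setminus\{b_i\}}(b_i-s)\neq 0$ because $\F$ is a field.

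Next I would consider $h=f(b)\,G-G(b)\,f$. Both $G$ and $f$ vanish on $B\setminus\{b\}$, and $h(b)=f(b)G(b)-G(b)f(b)=0$, so $h$ vanishes on all of $B$. The degree hypothesis enters here: since $\deg f<\sum_i(|S_i|-1)=\deg G$ and $G(b)\neq 0$, the monomial $\prod_{i=1}^n x_i^{|S_i|-1}$ occurs in $h$ only through the summand $f(b)\,G$, hence with coefficient $f(b)\neq 0$, and its total degree $\sum_i(|S_i|-1)$ equals $\deg h$. Thus $h$ satisfies the hypotheses of Theorem \ref{poly} with $d_i=|S_i|-1$; since $|S_i|>d_i$, there are $s_1\in S_1,\dots,s_n\in S_n$ with $h(s_1,\dots,s_n)\neq 0$, contradicting that $h$ vanishes on $B=S_1\times\dots\times S_n$. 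Therefore $\deg f\ge\sum_{i=1}^n(|S_i|-1)$.

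The argument is short, and the only delicate point is the bookkeeping around the leading term of $h$: everything hinges on the candidate monomial $\prod_i x_i^{|S_i|-1}$ genuinely realizing $\deg h$ with nonzero coefficient, which is exactly why the inequality $\deg f<\sum_i(|S_i|-1)$ is taken to be strict and why $G$ is built from \emph{monic} univariate factors, so that no cancellation can spoil that coefficient. It is also worth recording that we read ``$f$ vanishes at every point of $B$ except one'' as ``$f$ is nonzero at exactly one point of $B$'', which is what makes $f(b)\neq 0$ available and renders the degenerate case $f\equiv 0$ on $B$ irrelevant for the applications in the following sections.
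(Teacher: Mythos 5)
Your proof is correct, and the single genuinely delicate point --- that the coefficient of $\prod_i x_i^{|S_i|-1}$ in $h=f(b)\,G-G(b)\,f$ is exactly $f(b)\neq 0$ because the strict inequality $\deg f<\sum_i(|S_i|-1)$ forbids any contribution from $f$ --- is handled properly. The paper itself gives no proof of this statement (it is quoted as a consequence of Theorem 5 in the Alon--F\"uredi paper), but your argument is precisely the derivation from Theorem \ref{poly} that the paper's remark explicitly mentions as an alternative, so there is nothing to object to.
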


\bigskip
\section{Covers of the permutohedron}

\noindent
All points of $P_n$ lie in the hyperplane $H_0$ of equation
$f_0(x)=\sum_{i=1}^nx_i-{{n+1}\choose 2}=0$. However, if we view $\Pi_{n-1}$ as an
$(n-1)$-dimensional polytope, the meaningful 
question is the following:
{\em How many affine hyperplanes different from $H_0$ are needed to cover
  all points of $P_n$?}
Surprisingly this appears to be a more elusive problem.
It is clear that the hyperplanes of equation $x_1=i$,
$1\le i\le n$ cover $P_n$. Each of them contains exactly $(n-1)!$ points
of $P_n$, and their intersections with $P_n$ are pairwise disjoint.
The same is true for the hyperplanes of equation $x_i=1$.
If $n\ge 4$ is even, then one can do even better: The hyperplanes of equation
$x_1+x_j=n+1$,
$2\le j\le n$ cover $P_n$. Each of them contains exactly $n(n-2)!$ points
of $P_n$, and their intersections with $P_n$ are pairwise disjoint.
We believe that these examples are extremal in the following sense.
\medskip

\begin{conj}
Suppose that the vertex set of $\Pi_{n-1}$ is contained in the union
of the hyperplanes $H_1,\ldots,H_m$ different from $H_0$.
If $n$ is odd, then $m\ge n$. If $n\ge 4$ is even, then $m\ge n-1$.
\label{cover}
\end{conj}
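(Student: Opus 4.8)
\medskip\noindent
The plan is to combine the polynomial method with an induction on $n$ obtained by slicing $\Pi_{n-1}$ into lower-dimensional permutohedra. First, the polynomial reformulation. Deleting any $H_j$ that misses $P_n$ only decreases $m$, so we may assume every $H_j=\{\ell_j=0\}$ meets $P_n$; since $H_j\neq H_0$, the affine form $\ell_j$ is then not a constant plus a scalar multiple of $f_0$. Hence $F:=\prod_{j=1}^m\ell_j$ has degree exactly $m$, vanishes on all of $P_n$, and no factor $\ell_j$ is proportional to $f_0$. With $c_k:=e_k(1,\dots,n)$ one has $I(P_n)=\langle e_1-c_1,\dots,e_n-c_n\rangle$: the $e_k$ are a homogeneous system of parameters, so these polynomials (of degrees $1,2,\dots,n$) form a complete intersection whose coordinate ring has dimension $1\cdot2\cdots n=n!=|P_n|$ and is therefore reduced. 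Passing to leading forms (top-degree homogeneous parts) then gives $\operatorname{gr} I(P_n)=\langle e_1,\dots,e_n\rangle$, the coinvariant ideal of $S_n$ (the inclusion $\supseteq$ is clear and equality follows from the same dimension count). So the leading form $L_1\cdots L_m$ of $F$ — with $L_j$ the linear part of $\ell_j$, none proportional to $e_1=x_1+\dots+x_n$ — lies in the coinvariant ideal, and it suffices to prove the algebraic statement: \emph{any product of $m$ linear forms, none proportional to $e_1$, lying in $\langle e_1,\dots,e_n\rangle$ has $m\ge n$ for odd $n$ and $m\ge n-1$ for even $n\ge4$}. These bounds are sharp: $x_1^n=e_1x_1^{n-1}-e_2x_1^{n-2}+\cdots$ always lies in the ideal, and for even $n$ so does $\prod_{j=2}^n(x_1+x_j)$, the leading form of the cover $\{x_1+x_j=n+1\}$.

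Second, the induction. For $v\in[n]$ the slice $P_n^{(1,v)}=\{\sigma\in P_n:\sigma(1)=v\}$ lies in $\{x_1=v\}$ and is, after relabelling values, the vertex set of the permutohedron on the ground set $[n]\setminus\{v\}$, with distinguished hyperplane $H_0^{(v)}=\{x_1=v\}\cap H_0$. Call $H_j$ a \emph{cap} of slice $v$ if $H_j\supseteq P_n^{(1,v)}$. As $P_n^{(1,v)}$ affinely spans the codimension-two flat $H_0^{(v)}$, this happens exactly when $\ell_j=\alpha(x_1-v)+\beta f_0$, where $\alpha\neq0$ since $H_j\neq H_0$; comparing coefficients then shows $H_j$ caps at most one slice and, when it does, meets $P_n$ only inside that slice. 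Now dichotomize: either every slice has a cap, whence the caps are $n$ distinct hyperplanes and $m\ge n$; or some slice $v_0$ has no cap, and the nonempty traces $H_j\cap\{x_1=v_0\}$ cover $P_n^{(1,v_0)}$ by hyperplanes none equal to $H_0^{(v_0)}$, so induction applies. This forces the inductive statement to be posed for permutohedra on arbitrary real ground sets $C$ — with conjectured minimum $|C|-1$ when $C$ has even size $\ge4$ and is symmetric about its mean (realized by the pairing cover $\{x_1+x_j=\text{const}\}$) and $|C|$ otherwise, the base cases $|C|\le3$ being elementary since a hexagon on any three distinct values has no three collinear vertices. For even $n$ the dichotomy gives $m\ge\min\{n,\,h\}$ where $h$ is the minimum for the odd-size ground set $[n]\setminus\{v_0\}$; in particular $n=4$ is settled outright (it reduces to the hexagon), and in general the even case follows from the odd-size case one dimension down.

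The odd case is the main obstacle. For odd $n$ the critical slice $v_0=(n+1)/2$ has a symmetric ground set of even size $n-1$, so the second alternative yields only $m\ge n-2$, two short of the target, and one must exclude $m\in\{n-2,n-1\}$. A short bookkeeping argument — each cap meets $P_n$ in a single slice, while every cap-free slice needs nearly all the non-cap hyperplanes to cover it — shows that any extremal configuration with such an $m$ has every hyperplane meeting every slice and restricting, in each slice, to a minimum cover of the corresponding sub-permutohedron. Excluding this should hinge on the parity phenomenon driving the whole problem: the involution $x_i\mapsto n+1-x_i$ of $\Pi_{n-1}$ fixes only the centre — equivalently $(n+1)/2$ is self-complementary — which is precisely why the cover $\{x_1+x_j=n+1\}$ fails for odd $n$. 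Concretely I would try either to classify the minimum covers of the sub-permutohedra and show that no $n-2$ of them are simultaneously compatible across all $n$ slices, or, in the algebraic formulation, to attach to such a putative family a $\sgn$-weighted evaluation of degree below $\binom n2$ — a generalized Vandermonde in the last $n-1$ coordinates, summed over the value of the first coordinate — which must vanish on $P_n$ yet cannot vanish on every block when $n$ is odd. Making either precise is the step I expect to be genuinely hard, and is why the statement is offered only as a conjecture.
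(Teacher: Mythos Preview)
This statement is presented in the paper as an open \emph{conjecture}; the paper offers no proof. What the paper does contain is a short conditional argument: assuming Conjecture~5 (that for odd $n$ no hyperplane other than $H_0$ meets $P_n$ in more than $(n-1)!$ points), the odd case of Conjecture~4 follows by counting, and the even case follows by pigeonholing onto a facet $F_j\cong\Pi_{n-2}$ and applying Conjecture~5 there. So the paper's route is ``Conjecture~5 $\Rightarrow$ Conjecture~4'', with Conjecture~5 itself left open.

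Your approach is genuinely different and in some ways more structural. The reformulation via the coinvariant ideal is correct: the equalities $I(P_n)=\langle e_1-c_1,\dots,e_n-c_n\rangle$ and $\operatorname{gr} I(P_n)=\langle e_1,\dots,e_n\rangle$ hold by the regular-sequence/dimension argument you give, and the passage from affine covers to products of homogeneous linear forms in the coinvariant ideal is sound (including the observation that $L_j\not\propto e_1$ once $H_j$ meets $P_n$). The slicing induction is also set up correctly --- a cap meets $P_n$ in exactly one slice, and a cap-free slice inherits a cover by traces $\neq H_0^{(v_0)}$ --- and it does settle $n=4$ and, more generally, reduces the even case to the odd case one dimension down. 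But the gap you yourself flag is real and is the crux: for odd $n$ the induction loses two, because the central slice $v_0=(n+1)/2$ has an even, symmetric ground set admitting the pairing cover, and neither the bookkeeping sketch nor the proposed $\sgn$-weighted evaluation is carried out. That missing step is exactly why the paper, too, leaves the statement as a conjecture; your algebraic reformulation may well be the more promising line of attack, but as it stands the proposal is an outline rather than a proof.
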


\noindent
Consider a hyperplane $H$ not parallel to $H_0$, it intersects $H_0$ in
a 1-codimensional affine subspace. To cover $P_n$ by $n$ or less such
hyperplanes one needs to find such an $H$ which intersects $P_n$ in at 
least $(n-1)!$ points.
$H$ has an equation of the form $f(x)=0$,
where $f$ is a linear polynomial, in this case we write $H=H(f)$. Then
$H(g)\cap H_0=H(f)\cap H_0$ if and only if there exist $\alpha,\beta\in\R$,
$\alpha\ne 0$ such that $g=\alpha f+\beta f_0$.
Apart from such equivalence, it seems that $H(f)$ intersects $P_n$ in more
than $(n-1)!$ points if and only if $n$ is even and $f=x_i+x_j-(n+1)$
for some $i\ne j$. $|H(f)\cap P_n|=(n-1)!$ occurs
in each dimension for $f=x_i-x_j-1$,
and also for $f=x_i+x_j-n$, $f=x_i+x_j-(n+1)$, $f=x_i+x_j-(n+2)$ when $n$
is odd. From these examples one can construct various
economical hyperplane covers of $P_n$. For example, the hyperplanes
of equation $x_n=1$, $x_n-x_i=1$, $1\le i\le n-1$ cover $P_n$ for every $n$,
whereas the hyperplanes of equation $x_1=(n+1)/2$, $x_1+x_j=n+1$,
$2\le j\le n$ cover $P_n$ when $n$ is odd.
We believe that the following is true.

\begin{conj}
If $n$ is odd, then every hyperplane different from $H_0$ contains
at most $(n-1)!$ points of $P_n$.
\label{(n-1)!}
\end{conj}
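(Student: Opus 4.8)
The plan is to recast the statement as a bound on the fibres of a linear form over $S_n$ and then to exploit a cyclic group action. Every vertex of $\Pi_{n-1}$ lies on $H_0$, so a hyperplane $H\neq H_0$ meeting $P_n$ is cut out by an equation $\langle a,x\rangle=b$ in which $a$ is not a constant vector; replacing $a$ by $a+c(1,\dots,1)$ only shifts $\langle a,x\rangle$ by a constant along $P_n$, so I may assume $\sum_i a_i=0$ and $a\neq 0$. Identifying a vertex with the corresponding permutation $\pi$, the claim becomes: for odd $n$, every fibre $\{\pi\in S_n:\sum_i a_i\pi(i)=b\}$ has at most $(n-1)!$ elements — which would also yield the odd half of Conjecture~\ref{cover}. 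If the $a_i$ are linearly independent over $\Q$ then all $n!$ values are distinct, so the whole difficulty lies in the degenerate $a$. A partial result is obtained in the spirit of the proof of Theorem~\ref{main}: reducing $\langle a,x\rangle-b$ modulo the Gr\"obner basis of the defining ideal $\bigl(e_k(x)-e_k(1,\dots,n):1\le k\le n\bigr)$ of $P_n$, with the variables on which $a$ takes its most common value ordered first, a footprint estimate gives $|H\cap P_n|\le n!/(m+1)$, where $m$ is the maximal multiplicity among the $a_i$. This settles the conjecture when $m=n-1$, i.e. when all but one coordinate of $a$ coincide, but for smaller $m$ — in particular for the conjectured extremal normals $a=e_i-e_j$, where $m=n-2$ — it is far from sufficient.

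For the general case I would use the free action on $P_n$ of the cyclic group $C_n=\langle\rho\rangle$, where $\rho$ adds $1$ to every coordinate modulo $n$ (so that $n\mapsto 1$); it has exactly $(n-1)!$ orbits, each of size $n$. A direct computation shows that along the orbit of $\pi$ one has $\langle a,\rho^k\pi\rangle=\langle a,\pi\rangle-n\,s_k$, where $s_k=a_{\pi^{-1}(n)}+a_{\pi^{-1}(n-1)}+\dots+a_{\pi^{-1}(n-k+1)}$ is the sum of the first $k$ of the numbers $a_i$ listed in decreasing order of $\pi$-value, and that $\langle a,\pi\rangle$ equals $n$ times the mean $\bar s$ of $s_0,\dots,s_{n-1}$. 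Hence the number of vertices of that orbit lying on $H$ equals the number of $k\in\{0,\dots,n-1\}$ for which the recentred partial sum $s_k-\bar s$ equals $-b/n$. Summing over the $(n-1)!$ orbits, the conjecture becomes the following elementary-looking statement: for any reals $c_1,\dots,c_n$ with $\sum_i c_i=0$, not all zero, and any target $t$, the total, over the $(n-1)!$ cyclic orderings $(i_1,\dots,i_n)$ of $[n]$, of the number of recentred partial sums of $c_{i_1},\dots,c_{i_n}$ equal to $t$, is at most $(n-1)!$; equivalently, on average over the cyclic orderings a recentred partial sum attains any prescribed value at most once.

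Establishing this averaged partial-sum inequality, and pinning down where parity is used, is the crux — for even $n$ it is plainly false, the hyperplane $x_i+x_j=n+1$ already carrying $n(n-2)!>(n-1)!$ vertices. I expect the right route is a compression argument on $a$: show that unless the equation of $H$ reduces (modulo $f_0$) to $x_i-x_j=\mathrm{const}$ or $x_i+x_j=\mathrm{const}$, one may merge two coordinate values of $a$, or slide a single coordinate, without ever decreasing the maximal fibre size, thereby reducing to those two families, which can be analysed directly. The oddness of $n$ would enter only at the last step, through whether the central value $n+1$ is twice an integer and hence unattainable as a sum of two distinct coordinates. The delicate part — and where I would expect to spend most of the effort — is to make each compression step monotone for the \emph{maximal} fibre, rather than merely for some more tractable surrogate such as a moment of the fibre-size sequence.
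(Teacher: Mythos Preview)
The statement you are addressing is \emph{Conjecture}~\ref{(n-1)!}: the paper does not prove it, nor does it claim to. There is therefore no ``paper's proof'' to compare your attempt against. What the paper does with this conjecture is only to show that it would imply Conjecture~\ref{cover}; the conjecture itself is left open.

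Your proposal is likewise not a proof, and you say so yourself. The preliminary reductions --- normalising to $\sum_i a_i=0$, and the reformulation via the free $C_n$-action $\rho$ on $P_n$ --- are carried out correctly: the identity $\langle a,\rho^k\pi\rangle=\langle a,\pi\rangle-n\,s_k$ and the relation $\langle a,\pi\rangle=n\bar s$ both check out, so the translation into an averaged partial-sum statement over the $(n-1)!$ cyclic orderings is sound. But from that point on you only sketch a hoped-for compression argument and explicitly flag the monotonicity of the maximal fibre under each compression step as ``the delicate part''. That is precisely the gap: nothing in the proposal establishes the inequality, and the compression heuristic is not substantiated. The footprint bound $|H\cap P_n|\le n!/(m+1)$ you quote, even granting it, is far too weak for the relevant range of $m$, as you note.

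In short: the paper treats this as an open problem, and your write-up, while containing some correct and potentially useful reformulations, does not close it. If you wish to present this material, it should be framed as partial progress and heuristics toward Conjecture~\ref{(n-1)!}, not as a proof.
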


\noindent
If true, this would imply Conjecture \ref{cover} as follows. The statement
follows immediately if $n$ is odd. Suppose that
$n\ge 4$ is even and the hyperplanes $H_1,\dots,H_m$ cover $P_n$.
We may assume that $m\le n-1$.
Consider the hyperplanes $H(x_j-n)$ for $1\le j\le n$. They intersect
$\Pi_{n-1}$ in pairwise disjoint respective facets $F_j$,
congruent to $\Pi_{n-2}$ apiece,
of which no two can be contained in the same hyperplane $H_i$. By the
pigeonhole principle there is a facet
$F_j$, which is not fully contained in any $H_i$, $1\le i\le m$.
Within the $(n-1)$-dimensional ambient space $H_0$, consider the $k\le m$
hyperplanes $H_i\cap H_0$ which intersect the affine hull of $F_j$.
They cover all the vertices of $F_j$ and are different from its affine hull
$H(x_j-1)\cap H_0$. Applying a suitable isometry, it follows from
Conjecture \ref{(n-1)!} that each of them contains  at most $(n-2)!$
vertices of $F_j$. Accordingly, $k(n-2)!\ge (n-1)!$, yielding
$m\ge k\ge n-1$.
\medskip

\noindent
Turning to almost covers of the vertex set of $\Pi_{n-1}$, note that unlike
to the case of exact covers it
does not matter which ambient space we consider. Throughout this paper it will
be convenient to use the notation
$$V({x})=\prod_{1\le i<j\le n}(x_j-x_i)$$
for the Vandermonde polynomial in the variable $x=(x_1,\dots,x_n)$.
\medskip

\noindent{\em Proof of Theorem \ref{main}.}
A key observation, due to Alon \cite{A2}, which leads to a by now
standard application of the Combinatorial Nullstellensatz is that the
permutations of an $n$-element set $A$ can be identified with the set
obtained from $A^n$ by removing
the zeroes of the Vandermonde polynomial $V({x})$.
Geometrically, $P_n$ is obtained from $\{1,2,\ldots,n\}^n$ by removing the
points covered by the $n\choose 2$ hyperplanes of equation $x_i=x_j$.

Suppose that a collection of hyperplanes defined by the
linear equations $f_1({x})=0,\dots,f_m({x})=0$ constitute an almost
cover of $P_n$ where the uncovered vertex is ${v}$. Then the polynomial
$$f(x)=V({x})\prod_{i=1}^m f_i({x})\in \R[x_1,\ldots,x_n]$$
of degree ${n\choose 2}+m$
attains the value 0 at every point of the Cartesian product
$\{1,2,\dots,n\}^n$ except ${v}$. 
According to Theorem \ref{AF},
$${n\choose 2}+m\ge n(|A|-1)=n(n-1),$$
hence $m\ge {n\choose 2}$.
To see that this bound cannot be improved, notice that the hyperplanes
$x_i=j$ ($i<j$) cover every vertex but $(1,2,\ldots,n)$. Since $\S_n$ acts
transitively on $P_n$, there is a similar construction for each further
vertex of $\Pi_{n-1}$.
\hfill\qed
\medskip

\noindent{\em Remarks.} Replacing $\prod f_i$ by an arbitrary polynomial
of degree $m$ in the above proof one obtains the following more general
theorem: {\em If an algebraic
surface covers all vertices of $\Pi_{n-1}$ except one, then its degree is
at least $n\choose 2$.} The proof also works verbatim for sets of the form
$$X(\alpha_1,\ldots,\alpha_n)=
\{(\alpha_{\pi(1)},\ldots,\alpha_{\pi(n)}) \mid \pi\in \S_n\},$$
where $\alpha_1, \ldots , \alpha _n$ are different elements of an
arbitrary field $\F$.

\begin{cor}
Let $v,u_1,\dots,u_k\in X_n=X(\alpha_1,\ldots,\alpha_n)$ such that $v$ is
not contained in the affine hull of $\{ u_1,\dots,u_k\}$. 
Suppose that the affine
hyperplanes $H_1,\dots,H_m$ satisfy
$$X_n\setminus \{ u_1,\dots,u_k\}\subseteq H_1\cup\dots\cup H_m,\quad
v\not\in H_1\cup\dots\cup H_m.$$
Then $m\ge {n\choose 2}-1$.
\end{cor}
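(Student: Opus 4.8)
The plan is to reduce this statement to Theorem~\ref{main} (or rather its proof) by absorbing the exceptional points $u_1,\dots,u_k$ into the Vandermonde-type polynomial. Working with $X_n = X(\alpha_1,\dots,\alpha_n)$ identified (via Alon's observation) with $\{\alpha_1,\dots,\alpha_n\}^n$ minus the zero set of $V(x)=\prod_{i<j}(x_j-x_i)$, suppose the hyperplanes $H_i=H(f_i)$ with linear $f_i$ cover $X_n\setminus\{u_1,\dots,u_k\}$ but miss $v$. The first step is to build a polynomial that vanishes on all of $\{\alpha_1,\dots,\alpha_n\}^n$ except at $v$: namely
$$g(x) = V(x)\,L(x)\,\prod_{i=1}^m f_i(x),$$
where $L(x)$ is a polynomial of small degree vanishing at $u_1,\dots,u_k$ but not at $v$. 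Since $v\notin\mathrm{aff}\{u_1,\dots,u_k\}$, I can take $L$ to be a single affine-linear form (the defining equation of a hyperplane through the affine hull of the $u_j$'s that avoids $v$), so $\deg L = 1$.

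The second step is the degree bound. The polynomial $g$ vanishes at every point of $\{\alpha_1,\dots,\alpha_n\}^n$ except the single point $v$: on points with a repeated coordinate this is forced by $V$, on the remaining points (which correspond to $X_n$) it is forced by either $L$ (for the $u_j$) or by some $f_i$ (for everything else), and at $v$ we have $V(v)\neq 0$, $L(v)\neq 0$, and $f_i(v)\neq 0$ for all $i$. Theorem~\ref{AF} with $S_1=\dots=S_n=\{\alpha_1,\dots,\alpha_n\}$ then gives $\deg g \ge n(n-1)$. Since $\deg g \le \binom{n}{2} + 1 + m$, we conclude $m \ge n(n-1) - \binom{n}{2} - 1 = \binom{n}{2} - 1$, as claimed.

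The only delicate point is justifying that $\deg L = 1$ suffices, i.e.\ that a single hyperplane separates $v$ from $\{u_1,\dots,u_k\}$; but this is exactly the hypothesis $v\notin\mathrm{aff}\{u_1,\dots,u_k\}$, since any point outside an affine subspace can be cut off from it by an affine hyperplane containing that subspace. (If one worried about the underlying field, note the hypothesis is stated in terms of affine hull, which makes sense over any field, and the separating hyperplane can be produced by linear algebra: extend a spanning set of the direction space of $\mathrm{aff}\{u_j\}$ by a vector not killing $v-u_1$.) I would also remark that, just as in Theorem~\ref{main}, the factor $\prod f_i$ may be replaced by an arbitrary polynomial of degree $m$, yielding the analogous statement for algebraic hypersurfaces, and that the bound is sharp: take $u_1,\dots,u_k$ to be the vertices lying on one coordinate hyperplane $x_1=\alpha_1$ not through $v$, reducing to the sharpness construction of Theorem~\ref{main} in one fewer dimension.
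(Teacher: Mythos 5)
Your proposal is correct and rests on exactly the same idea as the paper's proof: introduce one extra hyperplane containing the affine hull of $\{u_1,\dots,u_k\}$ but missing $v$, so that together with $H_1,\dots,H_m$ one has an almost cover, forcing $m+1\ge\binom{n}{2}$. The only difference is that you re-run the polynomial argument of Theorem \ref{main} explicitly (with the extra linear factor $L$) rather than citing that theorem as a black box, which is a presentational rather than a substantive distinction.
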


\begin{proof}
Choose an affine hyperplane $H$ which contains the affine hull of
$\{ u_1,\dots,u_k\}$ but does not contain $v$. Then $H,H_1,\dots,H_m$ 
constitute an almost cover of $X_n$, hence $m+1\ge {n\choose 2}$.
\end{proof}

\section{Prisms over permutohedra}

\noindent
Consider an $n$-dimensional prism whose bases are congruent to the
permutohedron $\Pi_{n-1}$. Pick any vertex $v$. Covering the vertices of the
base which does not contain $v$ by one hyperplane and applying the
construction given in the proof of Theorem \ref{main}
to the other base one obtains
an almost cover of the vertex set by ${n\choose 2}+1$ hyperplanes. 
Here we prove that this is best possible.
Because of affine invariance it is enough to
prove it for the prism whose bases are $\Pi_{n-1}$ and
$-\Pi_{n-1}=\Pi_{n-1}-(n+1)(e_1+\dots+e_n)$, where $e_1,\dots,e_n$ is the
standard orthonormal basis for $\R^n$.

\begin{thm}
Every almost cover of $P_n\cup(-P_n)$
consists of at least ${n\choose 2}+1$ hyperplanes. 
\label{prismperm}
\end{thm}

\begin{proof}
  Let $m={n\choose 2}$ and suppose that the hyperplanes $H_i$, $1\le i\le m$
  cover every point of $P_n\cup(-P_n)$ except $v$.
  By symmetry, we may assume that $v\in -P_n$. The hyperplane
  $H_i$ is defined by an equation $f_i({x})=a_i$ where $f_i$ is a linear
  form. Consider the Vandermonde polynomial $V({x})=\prod_{i<j}(x_j-x_i)$.
  The polynomial
  $$f({x})=V({x})\prod_{i=1}^m (f_i({x})-a_i))$$
  of degree $n(n-1)$ vanishes at every point of the Cartesian product
  $\{1,2,\dots,n\}^n$. By Theorem \ref{punctured} (i), the coefficient of the
  monomial $\prod_{i=1}^n x_i^{n-1}$ in $f$ must be zero.

  On the other hand, the polynomial $f$ attains the value 0 at every point of
  the Cartesian product $\{-1,-2,\dots,-n\}^n$ except ${v}$. That
  is, the polynomial  
  $$g({x})=f(-{x})=(-1)^{n\choose 2}V({x})\prod_{i=1}^m (-f_i({x})-a_i))
  =V({x})\prod_{i=1}^m (f_i({x})+a_i))$$
  of degree $n(n-1)$ vanishes at every point of the Cartesian product
  $\{1,2,\dots,n\}^n$ except $-{v}$.
  By Theorem \ref{punctured} (ii), the coefficient of the
  monomial $\prod_{i=1}^n x_i^{n-1}$ in $g$ must be nonzero.
  Since the degree $n(n-1)$ parts of the polynomials $f$ and $g$ are identical,
  we arrive at a contradiction.
\end{proof}

\noindent
For a far reaching generalization of this result we refer to \cite{K}.

\section{Permutations via polynomials}

\noindent
Besides Theorem \ref{main} and Snevily's problem treated in \cite{A2,DKSS},
there are several combinatorial problems regarding permutations that may
be treated with the polynomial method, see e.g. \cite{GHNP,KP}. Although
it is not needed for the present work, it still may be useful to formulate
the following variant of Theorem \ref{poly} for permutations, which can be
extracted from the proofs in \cite{A2,DKSS}.

\begin{thm} 
Let $\alpha_1, \ldots , \alpha _n$ be $n$ different elements of a field $\F$.
Let $f\in \F[x]$ be a polynomial of degree
${n\choose 2}$. For each permutation $\pi \in \S_n$, denote by $c_\pi$ the
coefficient of the monomial $\prod_{i=1}^n x_i^{\pi(i)-1}$ in $f$. If
$$\sum_{\pi\in \S_n} \sgn(\pi)c_{\pi}\ne 0,$$
then there exists a point $s\in X(\alpha_1,\ldots,\alpha_n)$
such that $f(s)\ne 0$.
\label{perm}
\end{thm}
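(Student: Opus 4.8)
Let me sketch the strategy. The statement asserts a non-vanishing criterion for polynomials of degree exactly $\binom{n}{2}$ evaluated on the set $X(\alpha_1,\ldots,\alpha_n)$ of all "permuted tuples" $(\alpha_{\pi(1)},\ldots,\alpha_{\pi(n)})$. The natural approach is to reduce to the Combinatorial Nullstellensatz (Theorem \ref{poly}) via the Vandermonde trick already used in the proof of Theorem \ref{main}. So the plan is: suppose for contradiction that $f$ vanishes at every point of $X(\alpha_1,\ldots,\alpha_n)$, and form the product $F(x)=V(x)\cdot f(x)$ where $V(x)=\prod_{i<j}(x_j-x_i)$. Since $X(\alpha_1,\ldots,\alpha_n)=\{\alpha_1,\ldots,\alpha_n\}^n\setminus\{x : x_i=x_j \text{ for some } i\ne j\}$, the polynomial $F$ vanishes at every point of the Cartesian product $S^n$ with $S=\{\alpha_1,\ldots,\alpha_n\}$: on the diagonal-type points $V$ kills it, elsewhere $f$ does. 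Now $F$ has degree $\binom{n}{2}+\binom{n}{2}=n(n-1)=\sum_{i=1}^n(|S|-1)$, so we are exactly at the boundary of the Alon--F\"uredi / Combinatorial Nullstellensatz regime, and Theorem \ref{poly} applies with $d_i=n-1$ for every $i$: if the coefficient of $\prod_{i=1}^n x_i^{n-1}$ in $F$ is nonzero, then $F$ cannot vanish on all of $S^n$, a contradiction. Hence it suffices to show that this top coefficient of $F=V\cdot f$ equals (up to a fixed nonzero constant) the quantity $\sum_{\pi\in S_n}\sgn(\pi)c_\pi$.

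The heart of the argument is therefore a coefficient computation: extract the coefficient of the monomial $x_1^{n-1}x_2^{n-1}\cdots x_n^{n-1}$ in $V(x)f(x)$. Writing $V(x)=\sum_{\sigma\in S_n}\sgn(\sigma)\prod_{i=1}^n x_i^{\sigma(i)-1}$ via the Leibniz expansion of the Vandermonde determinant, and $f(x)=\sum_{(d_1,\ldots,d_n)} a_{(d)}\prod_i x_i^{d_i}$, the coefficient of $\prod_i x_i^{n-1}$ in the product is $\sum_{\sigma}\sgn(\sigma)\,a_{(n-\sigma(1),\ldots,n-\sigma(n))}$, where the exponent vector $(n-\sigma(1),\ldots,n-\sigma(n))$ is itself a permutation of $\{0,1,\ldots,n-1\}$, hence of the form $(\tau(1)-1,\ldots,\tau(n)-1)$ for the permutation $\tau$ defined by $\tau(i)=n+1-\sigma(i)$. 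Since only monomials of $f$ of total degree $\binom{n}{2}$ whose exponents form a permutation of $\{0,\ldots,n-1\}$ can contribute, and these are exactly the monomials $\prod_i x_i^{\pi(i)-1}$ with coefficient $c_\pi$, the sum becomes $\sum_{\sigma}\sgn(\sigma)c_{\tau(\sigma)}$. One then checks that the map $\sigma\mapsto\tau$, i.e. composition with the order-reversing involution $w_0:i\mapsto n+1-i$, is a bijection of $S_n$ satisfying $\sgn(\sigma)=\sgn(w_0)\sgn(\tau)$, so the whole expression equals $\sgn(w_0)\sum_{\tau\in S_n}\sgn(\tau)c_\tau$, which is a nonzero scalar multiple of the hypothesized sum.

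Putting it together: under the contradiction hypothesis, $F=Vf$ vanishes on $S^n$, has degree $n(n-1)$, and has the monomial $\prod_i x_i^{n-1}$ of that full degree with coefficient $\pm\sum_{\pi}\sgn(\pi)c_\pi\ne 0$; Theorem \ref{poly} (with each $|S_i|=n>n-1=d_i$) then furnishes a point of $S^n$ where $F$ is nonzero, a contradiction. Therefore $f$ does not vanish identically on $X(\alpha_1,\ldots,\alpha_n)$, which is the claim.

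The step I expect to be the main obstacle is the bookkeeping in the coefficient extraction — specifically, verifying carefully that the only contributions to the coefficient of $\prod_i x_i^{n-1}$ in $Vf$ come from the "permutation monomials" $\prod_i x_i^{\pi(i)-1}$ of $f$ (this uses that $\deg f=\binom{n}{2}$ exactly, forcing the complementary exponents from $V$ to pair with a full permutation of $\{0,\ldots,n-1\}$), and tracking the sign through the substitution $\pi(i)\leftrightarrow n+1-\sigma(i)$ so that the signs genuinely align rather than cancel. Everything else is a direct invocation of Theorem \ref{poly} together with the Vandermonde description of $X(\alpha_1,\ldots,\alpha_n)$ already recorded in the proof of Theorem \ref{main}.
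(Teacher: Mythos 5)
Your proposal is correct and follows essentially the same route as the paper: multiply by the Vandermonde polynomial $V$, observe that $Vf$ vanishes on $\{\alpha_1,\ldots,\alpha_n\}^n$ off $X(\alpha_1,\ldots,\alpha_n)$, extract the coefficient of $\prod_i x_i^{n-1}$ via the determinant expansion and the involution $\pi\mapsto(n+1-\pi(\cdot))$ (picking up the sign $(-1)^{\binom{n}{2}}$), and invoke Theorem \ref{poly}. The only cosmetic difference is that you phrase it as a contradiction while the paper applies the non-vanishing criterion directly.
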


\begin{proof}
Put $S_i=\{\alpha_1, \ldots , \alpha _n\}$ and consider the polynomial
$g(x)=V(x)f(x)$, then $\deg g=n(n-1)$. Using the determinant expression 
$$V(x)=\Det(x_i^{j-1})_{1\le i,j\le n}=
\sum_{\pi \in \S_n} \sgn(\pi) \prod_{i=1}^n x_i^{\pi(i)-1}$$
it is easy to compute the coefficient $c$ of the monomial
$\prod_{i=1}^nx_i^{n-1}$ in $g$. For each $\pi \in \S_n$ denote by $\pi'$
the permutation satisfying $\pi'(i)=n+1-\pi(i)$ for every $i$, then
$(\pi')'=\pi$ and $\sgn(\pi')=(-1)^{n\choose 2}\sgn(\pi)$. Thus,
$$c=\sum_{\pi\in \S_n} \sgn(\pi)c_{\pi'}=\sum_{\pi'\in \S_n} \sgn(\pi')c_{(\pi')'}=
(-1)^{n\choose 2}\sum_{\pi\in \S_n} \sgn(\pi)c_{\pi}\ne 0.$$
Since $g(x)=0$ for $x\in(S_1\times\dots\times S_n)\setminus
X(\alpha_1,\ldots,\alpha_n)$, the result follows from Thm \ref{poly}.
\end{proof}  

\noindent
As a quick application we derive Lemma 4 in \cite{DKSS}.
For an $n\times n$ matrix $M=(m_{ij})$, the permanent of $M$ is
$\per M=\sum_{\pi\in \S_n} m_{1\pi(1)}m_{2\pi(2)}\ldots m_{n\pi(n)}$.
Here $M=M(a_1,\ldots,a_n)$ will denote the Vandermonde matrix with entries
$m_{ij}=a_i^{j-1}$.

\begin{cor} 
Let $a_1,a_2,\ldots,a_n\in \F$ such that $\per M(a_1,\ldots,a_n)\ne 0$.
Then, for any subset 
$B\subset \F$ of cardinality $n$ there is a
numbering $b_1,\ldots ,b_n$ of the elements of $B$ such that the products
$a_1b_1,\ldots ,a_nb_n$ are pairwise different.
\label{mult}
\end{cor}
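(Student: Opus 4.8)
The plan is to apply Theorem~\ref{perm} to the polynomial
$$f(x)=\prod_{1\le i<j\le n}(a_jx_j-a_ix_i)\in\F[x_1,\ldots,x_n],$$
after letting $\alpha_1,\ldots,\alpha_n$ be the elements of $B$ in some order. This $f$ has degree exactly ${n\choose 2}$, and a point $s=(b_1,\ldots,b_n)\in X(\alpha_1,\ldots,\alpha_n)$ with $f(s)\ne 0$ is precisely a numbering $b_1,\ldots,b_n$ of the elements of $B$ for which $a_ib_i\ne a_jb_j$ whenever $i\ne j$. So the whole matter reduces to checking the non-vanishing hypothesis of Theorem~\ref{perm} for this particular $f$.

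To compute the coefficients $c_\pi$ I would carry out the linear substitution $y_i=a_ix_i$: under it $f$ becomes the Vandermonde polynomial $V(y)=\prod_{i<j}(y_j-y_i)$, whose determinant expansion gives
$$f(x)=\sum_{\pi\in S_n}\sgn(\pi)\prod_{i=1}^n(a_ix_i)^{\pi(i)-1}
=\sum_{\pi\in S_n}\sgn(\pi)\Big(\prod_{i=1}^na_i^{\pi(i)-1}\Big)\prod_{i=1}^nx_i^{\pi(i)-1}.$$
Since the exponent vectors $(\pi(1)-1,\ldots,\pi(n)-1)$ are pairwise distinct as $\pi$ ranges over $S_n$, reading off the coefficient of $\prod_{i=1}^nx_i^{\pi(i)-1}$ is immediate: $c_\pi=\sgn(\pi)\prod_{i=1}^na_i^{\pi(i)-1}$, with the convention $0^0=1$ covering any vanishing $a_i$. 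Consequently
$$\sum_{\pi\in S_n}\sgn(\pi)c_\pi=\sum_{\pi\in S_n}\sgn(\pi)^2\prod_{i=1}^na_i^{\pi(i)-1}
=\sum_{\pi\in S_n}\prod_{i=1}^na_i^{\pi(i)-1}=\per V(a_1,\ldots,a_n),$$
which is nonzero by hypothesis. Theorem~\ref{perm} then supplies a point $s\in X(\alpha_1,\ldots,\alpha_n)$ with $f(s)\ne 0$, and unravelling the definitions of $f$ and of $X(\alpha_1,\ldots,\alpha_n)$ yields the desired numbering of $B$.

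I do not expect a genuine obstacle. The only point deserving a moment's care is that the monomials $\prod_{i=1}^nx_i^{\pi(i)-1}$, $\pi\in S_n$, are indeed distinct monomials of the top degree ${n\choose 2}$, so that the coefficient extraction above is unambiguous and the degree hypothesis of Theorem~\ref{perm} is satisfied. (Since $\per V(a_1,\ldots,a_n)\ne 0$ forces at most one of the $a_i$ to vanish, and the argument never divides by any $a_i$, the degenerate case needs no separate discussion.)
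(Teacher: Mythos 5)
Your proposal is correct and follows essentially the same route as the paper: take $f(x)=V(a_1x_1,\dots,a_nx_n)=\prod_{i<j}(a_jx_j-a_ix_i)$, read off $c_\pi=\sgn(\pi)\prod_i a_i^{\pi(i)-1}$ from the determinant expansion of the Vandermonde, observe that $\sum_\pi \sgn(\pi)c_\pi=\per V(a_1,\ldots,a_n)\ne 0$, and apply Theorem~\ref{perm} with $\{\alpha_1,\ldots,\alpha_n\}=B$. Your extra remarks on the distinctness of the top-degree monomials and the harmlessness of a single vanishing $a_i$ are sound but not needed beyond what the paper already does implicitly.
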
 

\begin{proof}
Take $f(x)=V(a_1x_1,\dots, a_nx_n)=
\prod_{1\le i<j\le n}(a_jx_j-a_ix_i)$, then
$$f(x)=\sum_{\pi \in \S_n} \sgn(\pi) \prod_{i=1}^n (a_ix_i)^{\pi(i)-1}.$$
Thus, $c_\pi=\sgn(\pi)\prod_{i=1}^n a_i^{\pi(i)-1}$ and
$$\sum_{\pi\in \S_n} \sgn(\pi)c_{\pi}=
\sum_{\pi\in \S_n} (\sgn(\pi))^2\prod_{i=1}^n a_i^{\pi(i)-1}
=\per M(a_1,\ldots,a_n)\ne 0.$$
The claim follows applying Theorem \ref{perm} with
$\{\alpha_1, \ldots , \alpha _n\}=B$.
\end{proof}  

\section{Concluding remarks}

\noindent
For a finite set of points $X$ in an affine space and $v\in X$ define
$\ac(X,v)$ as the minimum number of hyperplanes whose union contains
$X\setminus \{v\}$ but does not contain $v$ and let
$$\ac(X)=\min_{v\in X}\ac(X,v).$$  
If $X$ is the vertex set of a convex polytope, we simply write $\ac(P)$.
Thus, $\ac(C_n)=n$, and $\ac(\Pi_{n-1})={n\choose 2}$.
For $n$-element sets $X\subset \R^d$, $\ac(X)$ attains every integer
between $1$ and $\lceil (n-1)/d \rceil$, even when $X$ is restricted
to sets in convex position. Thus one may expect a meaningful estimate on
$\ac(X)$ only if $X$ satisfies some appropriate geometric, combinatorial or
algebraic condition. Inspired by Theorem \ref{main} we found that
being the vertex set of a zonotope may be a reasonable indicator.
\medskip

\noindent
A zonotope is a convex polytope that can be represented as the Minkowski sum
of a finite number of line segments. A collection of line segments is called
nondegenerate if no two of the segments are parallel to each other. Each
zonotope $Z$ can be written as the Minkowski sum of a nondegenerate
collection of line segments.
Such a representation is unique up to translations of the segments by
vectors which add up to ${0}$. The number of the summands,
denoted by $\rk(Z)$, we call the rank of $Z$. 
\medskip

\noindent
Each zonotope of rank $n$ can be obtained as an orthogonal projection of an
affine image of the $n$-dimensional unit cube
$C_n$, see \cite{Z}. If the dimension of a zonotope is
1, then it is a line segment. The
2-dimensional zonotopes of rank $n$ are exactly the centrally symmetric
convex $2n$-gons, and every almost cover of such a polygon with lines
requires at least $n$ lines. These are the cases when most vertices of
$C_n$ are lost along the projection. In the other extreme, when the projection
is the identical transformation, $\ac(Z)=\rk(Z)$ follows from
the Alon--F\"uredi theorem, and Theorem \ref{main} asserts the same
for $Z=\Pi_{n-1}$. The construction in Section 4 also exhibits this property.

\begin{conj}
Every almost cover of the vertices of a zonotope $Z$
consists of at least $\rk(Z)$ hyperplanes.
\label{zonotope}
\end{conj}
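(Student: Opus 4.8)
The plan is to mimic the polynomial argument used for Theorem \ref{main}, replacing the Vandermonde polynomial by a polynomial adapted to the combinatorial structure of a general zonotope. Write $Z$ as a Minkowski sum $Z = \sum_{k=1}^{n} I_k$ of a nondegenerate collection of segments, where $n = \rk(Z)$. By projecting the $n$-cube $C_n$ onto $Z$ we obtain a surjection from the vertices of $C_n$ onto the vertices of $Z$, and each vertex of $Z$ corresponds to the set of vertices of $C_n$ lying in a face of $C_n$ that the projection collapses. The key step is to realize the vertex set of $Z$, embedded suitably in affine space, as the difference $B \setminus W$ of a Cartesian product $B = S_1 \times \dots \times S_n$ (with $|S_k| = 2$) and a set $W$ that is itself the zero set of an explicit low-degree polynomial $g$; here one wants $g$ to vanish precisely on the collapsed vertices of $C_n$ and nowhere else on $B$, with $\deg g$ controlled by $n$. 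If an almost cover of the vertices of $Z$ by hyperplanes $H(f_1), \dots, H(f_m)$ is given, missing vertex $v$, then $f = g \cdot \prod_{i=1}^m f_i$ vanishes on $B \setminus \{v\}$ but not at $v$, so Theorem \ref{AF} forces $\deg g + m \ge \sum_{k=1}^n (|S_k| - 1) = n$; if we can arrange $\deg g = 0$ — i.e. $W = \emptyset$ — we get $m \ge n$ directly, and otherwise we would need a sharper input.

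The subtlety is that for a general zonotope the projection $C_n \to Z$ really does collapse faces, so $W \ne \emptyset$ and the naive bound $\deg g + m \ge n$ only gives $m \ge n - \deg g$, which is too weak. The remedy I would pursue is a \emph{weighted} or \emph{shifted} version of the Cartesian product: instead of using the standard $2$-element sets one chooses the images of the segment endpoints under a generic linear functional so that the $n$ coordinates of the vertices of $Z$ genuinely range over prescribed $2$-element sets and the only coincidences among cube-vertices are the ones forced by the zonotope's oriented-matroid structure. One then needs a polynomial identity analogous to the Vandermonde expansion — an explicit formula for the "defect" polynomial $g$ whose monomials of top degree can be read off from the circuits of the associated oriented matroid — together with a counting argument (in the spirit of the $\pi \mapsto \pi'$ involution in the proof of Theorem \ref{perm}) showing that the relevant top-degree coefficient of $g \cdot \prod f_i$ is nonzero whenever $v$ is uncovered. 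This is exactly where Theorem \ref{poly} rather than the black-box Theorem \ref{AF} becomes the right tool, since we need to certify nonvanishing of a single coefficient.

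Concretely I would proceed in the following order. First, fix a nondegenerate representation $Z = \sum I_k$ and the associated affine projection $\phi \colon \R^n \to \R^d$ with $\phi(C_n) = Z$; record that the vertices of $Z$ are the images of those vertices of $C_n$ that are $\phi$-extreme. Second, pick a linear change of coordinates in the source so that the $k$-th coordinate of a cube-vertex reads off which endpoint of $I_k$ is used, identifying the cube-vertices with $\{0,1\}^n$. Third, construct the polynomial $g$ vanishing on the non-extreme cube-vertices: here the structure of the zonotopal tiling (equivalently, the regular subdivision coming from $\phi$) should give $g$ as a product over the "collapsed" directions, and I would aim to bound $\deg g$ by the number of such directions, then argue it contributes nothing to the final inequality because those directions are precisely the ones along which no hyperplane can separate $v$. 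Fourth, assemble $f = g \prod f_i$, compute the top-degree monomial, and invoke Theorem \ref{poly}. The main obstacle I anticipate is the third step: producing the polynomial $g$ with simultaneously (a) the right vanishing set on $\{0,1\}^n$ and (b) a degree bound tight enough that $\deg g + m \ge n$ can be upgraded to $m \ge n$, possibly by absorbing $\deg g$ into a refined lower bound on the number of hyperplanes needed to separate $v$ from the collapsed vertices — essentially an inductive or oriented-matroid argument that the "wasted" degree of $g$ is compensated by hyperplanes forced to lie parallel to the collapsed directions. Verifying sharpness, by contrast, is routine: the coordinate hyperplanes $x_k = c$ pulled back along $\phi$ give a cover of size $n$ missing one vertex, exactly as in the cube and permutohedron cases.
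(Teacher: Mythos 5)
First, a point of orientation: the statement you are proving is presented in the paper only as a conjecture. The authors offer supporting special cases (dimension two, the cube, the permutohedron) and a sharpness construction, but no proof, so the question is whether your outline closes the gap. It does not, and the obstruction is not merely that your third step is left open --- it is that the step is provably impossible in the form you describe. You want a ``defect'' polynomial $g$ on $B=S_1\times\dots\times S_n$ with $|S_k|=2$ that vanishes on the set $W$ of collapsed cube vertices, is nonzero at the preimage of the uncovered vertex $v$, and has degree small enough that $\deg g+m\ge n$ still yields $m\ge n$. But by the Alon--F\"uredi theorem itself, a polynomial of degree $d$ that does not vanish identically on such a product $B$ is nonzero at no fewer than $2^{n-d}$ of its points; since your $g$ is nonzero only on a subset of $B\setminus W$, whose cardinality equals the number of vertices of $Z$, a two-dimensional zonotope of rank $n$ (a centrally symmetric $2n$-gon, with $2n$ vertices) forces $\deg g\ge n-\log_2(2n)$. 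The conclusion $m\ge n-\deg g$ is then at best $m\ge\log_2(2n)$, far short of the required $m\ge n$, and no choice of weights, shifts, or two-element sets $S_k$ changes this count. (The $2n$-gon case of the conjecture is in fact true, as the paper notes, but for the elementary reason that a line meets at most two vertices of a convex polygon --- not by this method.) The remedies you sketch, such as circuit expansions of the oriented matroid or absorbing $\deg g$ into hyperplanes ``parallel to the collapsed directions,'' are precisely where a proof would have to live, and none of them is carried out.

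Two further remarks. The one case where the polynomial method does succeed beyond the cube, Theorem \ref{main}, is not an instance of your scheme: there the ambient product is $\{1,\dots,n\}^n$, an $n$-fold product of $n$-element sets with degree budget $n(n-1)=2{n\choose 2}$, twice the rank, so the Vandermonde defect polynomial of degree ${n\choose 2}$ is absorbed with exactly ${n\choose 2}$ to spare; your framework fixes $|S_k|=2$ and a budget of $\rk(Z)$, and cannot reproduce even this known case. Finally, the sharpness construction you give is incorrect as stated: a coordinate hyperplane $x_k=c$ of $C_n$ is not the preimage under $\phi$ of any hyperplane in the ambient space of $Z$ unless $\ker\phi$ happens to be parallel to it. The construction the paper records is different: after rescaling the generators so that their endpoints $v_1,\dots,v_n$ affinely span a hyperplane $H$ with $0\notin H$, the parallel hyperplanes $H,2H,\dots,nH$ --- the level sets of a single functional equal to $1$ on every generator --- cover every vertex except $0$, and this only establishes sharpness up to combinatorial equivalence.
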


\noindent
Further examples supporting this conjecture can be found in \cite{K},
where we also investigate the extent to which our method can be applied
to Coxeter permutahedra.
If true, this bound would be best possible in the following sense. Suppose that
the full dimensional zonotope $Z$ is the Minkowski sum of the line segments
$[0,v_i]$, $1\le i\le n$. Denoting by $H$ the affine hull of the points
$v_i$ and assuming $0\not\in H$,
the hyperplanes $H, 2H, \dots, nH$ cover each vertex of $Z$ except $0$.
Since an arbitrary scaling of the generating vectors of a zonotope 
preserves the face lattice of the zonotope,
each zonotope is combinatorially equivalent to a zonotope $Z$ which has
an almost cover with $\rk(Z)$ hyperplanes.
\medskip

\noindent
{\bf Note added.} During the refereeing process Conjecture \ref{zonotope}
was refuted by G\'abor Dam\'asdi \cite{D}.


\medskip

\end{document}